\documentclass[11pt]{amsart}

\usepackage{amsmath}
\usepackage{amsxtra}
\usepackage{amscd}
\usepackage{amsthm}
\usepackage{amsfonts}
\usepackage{amssymb}
\usepackage{mathtools}
\usepackage{eucal}
\usepackage[all]{xy}
\usepackage{graphicx}
\usepackage{tikz}
\usetikzlibrary{shapes.misc, matrix, arrows, decorations.pathmorphing, fit, backgrounds, decorations.markings, arrows.meta}
\usepackage{tikz-cd}
\usepackage{mathrsfs}
\usepackage{euler}
\usepackage{color}
\usepackage{caption}
\usepackage{enumerate}
\usepackage{faktor}
\usepackage{bbm}
\usepackage{stmaryrd}
\usepackage[T1]{fontenc} % improved font encoding
\usepackage[utf8]{inputenc} % for better handling of non-ASCII characters
\usepackage{hyperref}
\usepackage{float}
\usepackage{multicol}
\usepackage{scrextend}
\usepackage{dynkin-diagrams}

\usepackage[bottom=1.25in, top=1.5in, right=1.5in, left=1.5in]{geometry}

\theoremstyle{plain}
  \newtheorem{theorem}{Theorem}
  \newtheorem{proposition}[theorem]{Proposition}
  \newtheorem{lemma}[theorem]{Lemma}

\theoremstyle{definition}
  \newtheorem{definition}[theorem]{Definition}
  \newtheorem{example}[theorem]{Example}
  \newtheorem{conjecture}[theorem]{Conjecture}

\theoremstyle{remark}
  \newtheorem{remark}[theorem]{Remark}
  
  \newtheorem*{remark*}{Remark}

\DeclareMathAlphabet{\mathcal}{OMS}{cmsy}{m}{n}

\newcommand{\A}{\mathbb{A}}

\newcommand{\N}{\mathbb{N}}

\newcommand{\Z}{\mathbb{Z}}

\newcommand{\SL}{{\mathrm{SL}}}

\newcommand{\frieze}{{\mathrm{Frieze}}}

\newcommand{\paren}[1]{\mathopen{}\left(#1\right)\mathclose{}}
\newcommand{\set}[1]{\mathopen{}\left\{#1\right\}\mathclose{}}

\newcommand{\verts}[1]{\mathopen{}\left\lvert#1\right\rvert\mathclose{}}

\newcommand\restr[2]{{% we make the whole thing an ordinary symbol
  \left.\kern-\nulldelimiterspace % automatically resize the bar with \right
  #1 % the function
  \right|_{#2} % this is the delimiter
  }}
\newcommand{\Mid}{\,\middle|\,}

\newcommand{\widesim}[2][2]{
  \mathrel{\overset{#2}{\scalebox{#1}[1]{$\sim$}}}
}

\makeatletter
\AtBeginDocument
 {
    \def\@thm#1#2#3{%
      \ifhmode
        \unskip\unskip\par
      \fi
      \normalfont
      \trivlist
      \let\thmheadnl\relax
      \let\thm@swap\@gobble
      \let\thm@indent\indent % indent
      \thm@headfont{\scshape}% heading font small caps
      \thm@notefont{\fontseries\mddefault\upshape}%
      \thm@headpunct{.}% add period after heading
      \thm@headsep 5\p@ plus\p@ minus\p@\relax
      \thm@space@setup
      #1% style overrides
      \@topsep \thm@preskip               % used by thm head
      \@topsepadd \thm@postskip           % used by \@endparenv
      \def\dth@counter{#2}%
      \ifx\@empty\dth@counter
        \def\@tempa{%
          \@oparg{\@begintheorem{#3}{}}[]%
        }%
      \else
        \H@refstepcounter{#2}%
        \hyper@makecurrent{#2}%
        \let\Hy@dth@currentHref\@currentHref
        \AddToHookNext{para/begin}{\MakeLinkTarget*{\Hy@dth@currentHref}}%
        \def\@tempa{%
          \@oparg{\@begintheorem{#3}{\csname the#2\endcsname}}[]%
        }%
      \fi
      \@tempa
    }%
  \dth@everypar={%
    \@minipagefalse \global\@newlistfalse
    \@noparitemfalse
    \if@inlabel
      \global\@inlabelfalse
      \begingroup \setbox\z@\lastbox
       \ifvoid\z@ \kern-\itemindent \fi
      \endgroup
      \unhbox\@labels
    \fi
    \if@nobreak \@nobreakfalse \clubpenalty\@M
    \else \clubpenalty\@clubpenalty \everypar{}%
    \fi
  }%
}

\newcommand{\subalign}[1]{%
  \vcenter{%
    \Let@ \restore@math@cr \default@tag
    \baselineskip\fontdimen10 \scriptfont\tw@
    \advance\baselineskip\fontdimen12 \scriptfont\tw@
    \lineskip\thr@@\fontdimen8 \scriptfont\thr@@
    \lineskiplimit\lineskip
    \ialign{\hfil$\m@th\scriptstyle##$&$\m@th\scriptstyle{}##$\hfil\crcr
      #1\crcr
    }%
  }%
}

\makeatother

\tikzset{
  /Dynkin diagram/root radius=.12cm,
  /Dynkin diagram/edge/.style={thick}
}

\title{On maximal Dynkin friezes}

\author[Robin Zhang]{Robin Zhang}
\address{Department of Mathematics, Massachusetts Institute of Technology}
\email{robinzmath@gmail.com}

\date{September 16, 2026}

\begin{document}

% \linenumbers

\begin{abstract}
  The maximal entries of Dynkin friezes over the positive integers have recently been determined for all finite Dynkin types except $B_n$ and $D_n$. In this note, we explicitly construct large positive integral points on affine cluster varieties of type $B_n$ (resp. $D_n$), giving rise to friezes of types $B_n$ (resp. $D_n$) over the positive integers with largest entries $F_{n+1} F_{n+2} - 1$ (resp. $F_n F_{n+1} - 1$) where $F_k$ is the $k$-th Fibonacci number. We conjecture that these are the maximal possible entries for their respective Dynkin types.
\end{abstract}

\maketitle

\setcounter{tocdepth}{1}
\tableofcontents

%%%%%%%%%%%%%%%%%%%%%%%%%%%% Introduction %%%%%%%%%%%%%%%%%%%%%%%%%%%%

% \vspace*{-.2in}

\section{Introduction}
In combinatorics,
frieze patterns were introduced
by Coxeter \cite{coxeter} to generalize
the classical identities of Gauss \cite{gauss-1866}
on the \textit{pentagramma mirificum}.
The classical frieze patterns of Coxeter
were further generalized by Caldero--Chapoton \cite{caldero-chapoton}
(cf. Assem--Reutenauer--Smith \cite{assem-reutenauer-smith})
to arbitrary Dynkin types:
a (positive integral) Dynkin frieze is a homomorphism of
an acyclic cluster algebra with trivial coefficients
to $\Z$ that send cluster variables to positive integers,
and Coxeter's frieze patterns are precisely the
Dynkin friezes of type $A_n$.
Any Dynkin frieze can be represented
as an infinite array of Dynkin diagrams
with positive integers at each node
and a unimodular condition.
For example,
Coxeter's classical friezes of type $A_n$
are arrays
\begin{equation}
  \label{eq:An-frieze}
  \begin{tikzcd}[row sep=.01in, column sep = .03in, font={\small}]
      \ldots & & 1 & & 1 & & 1 & & 1 & & 1 & & \ldots \\
      & \ldots & & x_{1, 1} & & x_{1, 2} & & x_{1, 3} & & x_{1, 4} & & \ldots \\
      \ldots & & x_{2, 0} & & x_{2, 1} & & x_{2, 2} & & x_{2, 3} & & x_{2, 4} & & \ldots \\
      & \ddots & & \ddots & & \ddots & & \ddots & & \ddots & & \ddots \\
      \ldots & & x_{n, -1} & & x_{n, 0} & & x_{n, 1} & & x_{n, 2} & & x_{n, 3} & & \ldots \\
      & \ldots & & 1 & & 1 & & 1 & & 1 & & \ldots 
  \end{tikzcd}
\end{equation}
in which every adjacent sub-diamond
$\begin{smallmatrix}
  & b & \\
  a & & d \\
  & c &
\end{smallmatrix}$
satisfies the
$\SL_2$ unimodular relation $ad - bc = 1$.
For arbitrary Dynkin types, these unimodular rules
are modified according to the underlying generalized Cartan matrix;
we give the precise combinatorial definitions, along with the
explicit formal arrays for types $B_n$ and $D_n$,
in Section \ref{sec:frieze-review}.
Alternatively, we may view positive integral friezes
as positive integral points on algebraic varieties;
we review this perspective in Section \ref{sec:frieze-review} as well.

For a fixed Dynkin type $\Delta_n$ (e.g. $\Delta_n = A_3$),
let $\frieze(\Delta_n, \N)$ denote the set of
positive integral Dynkin friezes of type $\Delta_n$.
For the classical family $A$,
the finiteness and enumeration of
$\frieze(A_n, \N)$
was established by
Conway--Coxeter \cite{coxeter-conway-2}
via an explicit
bijection to the set of triangulations
of $(n+3)$-gons,
which has cardinality equal to
the $(n+1)$-st Catalan number
$\frac{1}{n+2}
{\binom{2n+2}{n+1}}$.
For any infinite type $\Delta_n$,
the infinitude of $\frieze(\Delta_n, \N)$
was established by Morier--Genoud
\cite{morier-genoud-2012}
using the classification of cluster algebras
of Fomin--Zelevinsky
\cite{fomin-zelevinsky-2}.
For a general finite type $\Delta_n$,
the finiteness of
$\frieze(\Delta_n, \N)$
was first proved by Gunawan--Muller
\cite{gunawan-muller};
the complete enumeration of
$\#\frieze(\Delta_n, \N)$ for all $\Delta_n$
is the result of an extensive program
in combinatorics, cluster algebra theory,
and Diophantine geometry
in \cite{coxeter-conway-2,morier-genoud-ovsienko-tabachnikov,
fontaine-plamondon,bfgst,zhang-2025}
and summarized in Figure \ref{fig:enumeration}.
\begin{figure}[H]
  % (Sharp except $B_n \, \& \, D_n$)\\
  \renewcommand{\arraystretch}{1.4}
  \resizebox{\textwidth}{!}{%
    \begin{tabular}{|c|c|c|c|c|c|c|c|c|c|c|c|}
      \hline
      Type $\Delta_n$
          & $A_n$ & $B_n$ & $C_n$ & $D_n$ 
          & $E_6$ & $E_7$ & $E_8$ 
          & $F_4$ & $G_2$ 
          & \textcolor{red}{$\infty$} \\
      \hline
      $\#\frieze(\Delta_n, \N)$
          & $\displaystyle \frac{1}{n+2}{\binom{2n+2}{n+1}}$
          & $\displaystyle \sum_{m=1}^{\lfloor \sqrt{n+1} \rfloor}
        \binom{2n - m^2 + 1}{n}$
          & $\displaystyle \binom{2n}{n}$
          & $\displaystyle \sum_{m=1}^{n} d(m) \binom{2n - m - 1}{n - m}$
          & $868$ 
          & $4400$ 
          & $26952$ 
          & $112$ 
          & $9$ 
          & \textcolor{red}{$\infty$} \\
      \hline
    \end{tabular}
  }
  \caption{Enumeration of Dynkin friezes,
    $d(m) := \#\{\text{divisors of } m\}$}
  \label{fig:enumeration}
\end{figure}

In this paper, we study the set of values attained by Dynkin friezes.
Our first result is rather simple and
concerns the ``universality'' of frieze values.
While individual friezes are constrained by local relations,
we show that the union of all friezes covers the natural numbers in Section \ref{sec:universality}.
\begin{theorem}
  \label{thm:universality}
  For every integer $m \geq 2$
  and every integer $n \geq m$,
  there exists a positive integral frieze
  of type $A_n$ (resp. $B_n, C_n, D_n$)
  containing $m$ as an entry.
  Consequently, the set of all values appearing in
  positive integral Dynkin friezes is exactly $\N$.
\end{theorem}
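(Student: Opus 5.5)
Type $A_n$ comes first, via the Conway--Coxeter bijection \cite{coxeter-conway-2}. A frieze of type $A_n$ corresponds to a triangulation of an $(n+3)$-gon. Its quiddity row (the first nontrivial row $x_{1,j}$) records the number of triangles incident to each vertex. If a vertex $v$ is joined to all other vertices, a fan triangulation, then $v$ meets $n+1$ triangles. To get the value $m$ exactly, I would fan out from $v$ only partially. Take the $(n+3)$-gon, and from $v$ draw diagonals to $m-1$ consecutive vertices so that $v$ is incident to exactly $m$ triangles. Triangulate the remaining polygon arbitrarily, with no further diagonals from $v$. This is possible for any $m$ with $1 \le m \le n+1$, so in particular for $2 \le m \le n$. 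The quiddity entry at $v$ is then $m$, a positive integer entry of a frieze of type $A_n$. The only point to check is that the leftover region is a polygon with at least three sides not containing $v$ in its interior. This is clear, because cutting off the fan at $v$ leaves a polygon whose vertices are the remaining ones.

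For types $C_n$ and $B_n$ I would use the standard folding/unfolding. Friezes of type $C_n$ correspond to friezes of type $A_{2n-1}$ invariant under the central symmetry, i.e.\ centrally symmetric triangulations of a $(2n+2)$-gon. Friezes of type $B_n$ relate to these by rescaling the short-root row, which only alters entries in the last row by a factor involving $2$. So it suffices to produce a centrally symmetric triangulation of the $2(n+1)$-gon in which some vertex $v$ meets exactly $m$ triangles. Place the fan of $m-1$ diagonals at $v$ and its antipodal copy at $-v$, both inside the half-polygon on one side of a diameter. This fits since $m-1 \le n-1 < n+1$. Then complete symmetrically, using the diameter through a vertex not adjacent to the fans. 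The quiddity entry in the first row corresponds to a long root node for $n\ge2$, so it is unaffected by the $B$/$C$ rescaling. Hence the value $m$ appears in both.

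For type $D_n$ I would use the analogous description: triangulations of a punctured $n$-gon (Schiffler), or equivalently symmetric $A$-type data on a $2n$-gon. The first-row entries again count incident arcs. Take a punctured polygon with a fan of $m-1$ arcs at a boundary vertex $v$ away from the puncture, which is possible since $m \le n$. Complete to a triangulation by connecting the rest to the puncture. This gives entry $m$ at $v$.

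The main obstacle is making these $B$, $C$, $D$ identifications rigorous with the conventions of Section~\ref{sec:frieze-review}. I need to confirm that the first row of the folded frieze is exactly the quiddity of the unfolded triangulation at a node that is not the branch or short node. Then no extra factor appears, and the entry really equals the incidence count $m$.

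The final sentence of Theorem~\ref{thm:universality} follows from three observations. Every frieze has the value $1$ in its boundary rows. Every $m\ge2$ occurs by the above. Entries are positive integers by definition.
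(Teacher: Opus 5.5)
You take a genuinely different route from the paper, using combinatorial models rather than explicit frieze points. Your type $A_n$ argument is fine, and so is the idea for $C_n$ via centrally symmetric triangulations of the $(2n+2)$-gon: a vertex off the chosen diameter can be given any number $1,\dots,n$ of triangles inside its half. Your phrase about both fans lying ``on one side of a diameter'' is garbled, since antipodal fans lie in opposite halves, but that is repairable.

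The genuine gap is type $B_n$. No rescaling of the short-root row turns $C_n$ friezes into $B_n$ friezes. The counts already differ: $\#\frieze(B_3,\N)=21\neq 20=\#\frieze(C_3,\N)$. In fact no $B_n$ frieze $(x_{i,j})$ can share rows $2,\dots,n$ with a $C_n$ frieze $(z_{i,j})$:
\begin{itemize}
\item Comparing the row-$2$ relations $x_{2,j}x_{2,j+1}-1=x_{1,j+1}^2x_{3,j}$ and $z_{2,j}z_{2,j+1}-1=z_{1,j+1}z_{3,j}$ forces $z_{1,j}=x_{1,j}^2$.
\item The row-$1$ relations then give $(x_{2,j}+1)^2=x_{1,j}^2x_{1,j+1}^2=z_{1,j}z_{1,j+1}=x_{2,j}^2+1$, i.e.\ $x_{2,j}=0$.
\end{itemize}
The correct unfolding of $B_n$ is $D_{n+1}$, not $A_{2n-1}$. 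This is the embedding $Y_{B_{n-1}}(\N)\to Y_{D_n}(\N)$ used in the proof of Theorem~\ref{thm:Dn-construction-fibonacci}. A model-based proof would therefore need fork-symmetric (i.e.\ tag-symmetric) triangulations of a once-punctured $(n+1)$-gon, and you have not supplied that.

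Your $D_n$ step also needs repair before it is a proof. If $v$ is among the vertices you join to the puncture, it lies in $(m-1)+2=m+1$ triangles, so the fan should have $m-2$ arcs. Also, the entries counting triangles at a boundary vertex sit in the row of the end node $n$ (arcs cutting off one vertex). In the paper's labeling, rows $1,2$ are fork rows carrying the arcs to the puncture. So ``first-row entries count incident arcs'' is not correct as stated, and the count must be rechecked for triangulations with self-folded triangles.

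The paper sidesteps all of these model identifications. It writes down one point per type whose coordinates include every $m\in\{2,\dots,n\}$, e.g.\ $(n+1,n,n-1,\dots,2;\,1,n+3,2,\dots,2)\in Y_{B_n}(\N)$. There the only nontrivial check is $n(n+3)=(n+1)^2+(n-1)$, after which Proposition~\ref{prop:correspondence} gives the frieze. Using that point would close your $B_n$ gap immediately.

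Finally, the rows of $1$'s bounding the array are not entries $x_{i,j}$ under Definition~\ref{def:dynkin-frieze}. The value $1$ does occur as a genuine entry (e.g.\ $y_1=1$ in these points), which is what the last sentence of the theorem needs.
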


Given that the value set is unbounded as the rank $n$ varies,
a natural question arises:
what is the maximal entry in a frieze of \textit{fixed} rank $n$?
The maxima for types $A_n$ and $C_n$
are known to be
$F_{n+2}$ and $F_{2n+1}$
respectively due to the work of
Cheah--de Saint Germain
\cite[Theorem 1]{cheah-dsg},
where $(F_k)_{k \geq 0}$ is
the Fibonacci sequence defined by
$F_0 = 0$, $F_1 = 1$,
and $F_{k + 1} = F_k + F_{k - 1}$.
The maxima for exceptional types are known
due to the author's explicit calculations
in \cite[Proposition 4.1]{zhang-2025}.
For types $B_n$ and $D_n$,
the problem remains open.
\begin{figure}[H]
  \renewcommand{\arraystretch}{1.4}
  \resizebox{\textwidth}{!}{%
    \begin{tabular}{|c|c|c|c|c|c|c|c|c|c|c|c|}
      \hline
      Type $\Delta_n$ 
          & $A_n$ & $B_n$ & $C_n$ & $D_n$ 
          & $E_6$ & $E_7$ & $E_8$ 
          & $F_4$ & $G_2$ 
          & \textcolor{red}{$\infty$} \\
      \hline
      $u_{\Delta_n, \N}$
          & $F_{n+2}$ 
          & $\leq 2^{\frac{(n+1)^2(n-2)}{2}}$
          & $F_{2n+1}$ 
          & $\leq 2^{\frac{n^3}{2}}$
          & $307$ 
          & $135503$ 
          & $2820839$ 
          & $307$ 
          & $14$ 
          & \textcolor{red}{$\infty$} \\
      \hline
    \end{tabular}
  }
  \caption{Explicit bounds on maximal frieze entries from
    \cite{cheah-dsg} and \cite[Proposition 4.1]{zhang-2025}}
  \label{fig:bounds}
\end{figure}

Our second main contribution is the construction of explicit families of friezes
for types $B_n$ and $D_n$ that exhibit rapid growth.
For a fixed Dynkin type $\Delta_n$, define
\[
  u_{\Delta_n, \N}
    := \max \set{x_{i, j} \in F
      \Mid F \in \frieze(\Delta_n, \N)}
\]
to be the largest positive integer that can
appear as an entry of
a positive integral frieze of type $\Delta_n$.
Our new explicit constructions
of $B_n$-friezes and $D_n$-friezes
in Section \ref{sec:maximal}
give lower bounds
on $u_{B_n, \N}$ and $u_{D_n, \N}$,
which complement the exponential upper bounds
of \cite[Proposition 4.1]{zhang-2025}.

\begin{theorem}
  \label{thm:max-bound}
  Let $n$ be a positive integer.
  Then
  \begin{align*}
    u_{B_n, \N}
      &\geq F_{n+1} F_{n+2} - 1, \\
    u_{D_n, \N} &\geq F_n F_{n+1} - 1.
  \end{align*}
\end{theorem}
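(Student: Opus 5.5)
The plan is to reduce both inequalities to one explicit family of $D$-friezes, using folding and the Laurent phenomenon. For the folding, recall that friezes of type $C_n$ correspond to friezes of type $A_{2n-1}$ that are invariant under the diagram involution; this is consistent with the maxima $F_{2n+1}$ for both types. By the dual folding, friezes of type $B_n$ should correspond to friezes of type $D_{n+1}$ that are invariant under swapping the two fork nodes. Note that the claimed bound for $B_n$ is exactly the claimed bound for $D_{n+1}$. So it suffices, for every $n$, to produce a positive integral $D_{n}$-frieze that is symmetric under the fork swap and contains the entry $F_nF_{n+1}-1$. The $B_{n-1}$ statement then follows by folding, after checking on the explicit arrays of Section \ref{sec:frieze-review} that a fork-symmetric $D_n$-frieze restricts to a $B_{n-1}$-frieze with the same entries. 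The small values of $n$ (where $D_n$ degenerates) I would check by hand.

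For the construction I would use the unitary seed trick. Choose a seed $(\mathbf{x},Q)$ of the $D_n$ cluster algebra, possibly non-acyclic, and specialize every cluster variable of $\mathbf{x}$ to $1$. By the Laurent phenomenon, together with positivity of the Laurent expansions, every cluster variable then evaluates to a positive integer. This yields a positive integral frieze, namely an integral point on the affine cluster variety, without any case analysis of the unimodular rules. The entry at a cluster variable $x_\gamma$ equals its Laurent polynomial evaluated at all ones. In the punctured-polygon model of type $D_n$ (Schiffler), this is the number of perfect matchings of the snake graph, or of $T$-paths, of the tagged arc $\gamma$ relative to the chosen triangulation.

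To obtain products of Fibonacci numbers, I would take the triangulation of the punctured $n$-gon consisting of a zig-zag fan of arcs, which maximizes crossings as in the type $A$ extremal case, together with the two arcs at the puncture (plain and notched). Choosing the two puncture arcs compatibly makes the seed, and hence the frieze, invariant under the tag-swap, which is the fork swap. The candidate maximal arc is a tagged arc that winds around the puncture and crosses the whole zig-zag. Its snake graph decomposes into two zig-zag snake graphs of lengths about $n$ and $n+1$, glued through the puncture. Zig-zag pieces contribute $F_n$ and $F_{n+1}$ matchings, and the gluing at the puncture (the self-folded triangle) removes exactly one matching, giving $F_nF_{n+1}-1$.

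The main obstacle is this last computation. The matching count for arcs through a puncture, or equivalently the Musiker--Schiffler--Williams formula for tagged and notched arcs, is delicate, and the exact correction $-1$ must be verified. I would first check the cases $n=4,5$ directly by mutating the quiver and evaluating at $1$, to pin down the indexing. Then I would prove the general formula by induction on $n$. Adding one more zig-zag triangle should multiply through the Fibonacci recursion, and one would verify the identity $F_{n+1}F_{n+2}-1 = (F_nF_{n+1}-1)+\cdots$ arising from the skein or Ptolemy relation. If the puncture correction proves awkward, the fallback is to write down the first few rows of the frieze array explicitly and verify the $D_n$ mesh relations directly by Fibonacci identities such as Cassini's.
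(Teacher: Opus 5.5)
Your folding reduction is sound, and it is the paper's own reduction run in the opposite direction. The map $(x_1',\dots,x_{n-1}';y_1',\dots,y_{n-1}')\mapsto(x_1',x_1',x_2',\dots,x_{n-1}';y_1',y_1',y_2',\dots,y_{n-1}')$ identifies $Y_{B_{n-1}}(\N)$ with the fork-symmetric points of $Y_{D_n}(\N)$. The unitary-seed idea is also legitimate, and it aims at the right friezes: in Example~\ref{ex:B6-frieze} the entries equal to $1$ lie on an alternating slice, so the paper's extremal friezes are unitary for an alternating acyclic seed.

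\textbf{The gap.} The proposal stops exactly where the content of the theorem begins. Laurent positivity only shows that \emph{some} positive integral frieze exists, which is trivial. The statement is that a specific frieze contains the entry $F_nF_{n+1}-1$, and you never compute any entry. Three things are missing:
\begin{enumerate}
\item The seed is not pinned down. ``Zig-zag fan'' is ambiguous: a fan and a zig-zag give different seeds with very different maxima, already in type $A$ ($n+1$ versus $F_{n+2}$). Where the self-folded triangle sits relative to the zig-zag also decides whether you get the alternating $D_n$ quiver at all.
\item The count ``two zig-zag pieces contribute $F_n$ and $F_{n+1}$, and the gluing removes exactly one matching'' is not a general fact about snake graphs. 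For a concatenation, the correction to the product is itself a product of matching numbers of truncated pieces, so saying it equals $1$ needs an argument you do not give.
\item The induction step ``$F_{n+1}F_{n+2}-1=(F_nF_{n+1}-1)+\cdots$'' is left blank.
\end{enumerate}

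\textbf{What the paper does.} It supplies exactly this verification, working with frieze points rather than arcs. For $B_n$ it prescribes $x_1=F_{n+1}$, $y_1=F_{n+2}$, $x_2=F_{n+1}F_{n+2}-1$, $y_2=1$, and $y_i=3$ for $3\le i\le n-1$ except a single $y_{t(n)}=5$. It runs $x_{i+1}=y_ix_i-x_{i-1}$ and proves $B^{\ell_2}CB^{\ell_1}w_3=(3,8)^{T}$ using Catalan's and d'Ocagne's identities. This gives $x_n=3$, $x_{n-1}=8$, hence $y_n=3\in\N$. Positivity follows from the strict decrease $x_2>\dots>x_n$, and small $n$ are checked directly.

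\textbf{How you could finish your route.} The ``$-1$'' is not a gluing effect at the puncture. It is the fork exchange relation \eqref{eq:D1}, $x_1y_1=x_3+1$; in arc language, this is the skein relation whose second term is a boundary segment. Integrality and positivity are automatic for a unitary frieze. So it would suffice to show that two $\tau$-consecutive fork-row entries (arcs incident to the puncture) of your frieze equal $F_n$ and $F_{n+1}$. That would genuinely shorten the paper's argument, but as written no entry of your frieze is actually determined.
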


Based on computational verification
for small ranks
(see Appendix \ref{app:maximal}),
we conjecture that our constructions
are actually optimal.
\begin{conjecture}
  \label{conj:BnDn-largest}
  Let $n$ be a positive integer.
  Then
  \begin{align*}
    u_{B_n, \N} &= F_{n+1} F_{n+2} - 1, \\
    u_{D_n, \N} &= F_n F_{n+1} - 1.
  \end{align*}
\end{conjecture}

\begin{remark}
  \label{rem:efficiency}
  Theorem \ref{thm:universality} guarantees that for each of the four classical Dynkin families
  $\Delta \in \{A, B, C, D\}$, any integer $k \geq 2$ appears as an entry in a Dynkin frieze of rank $k$.
  This raises natural quantitative questions regarding the ``efficiency'' of
  each classical Dynkin family; for each positive integer $k$,
  consider the minimal rank
  \[
    n_{\min}(\Delta, k)
      = \min
        \set{n \geq 2 \Mid
        k \in F
        \text{ for some frieze }
        F \in \frieze(\Delta_n, \N)}
  \]
  in which $k$ appears as an entry.
  An immediate corollary of Theorem \ref{thm:universality}
  is the upper bound
  $n_{\min}(\Delta, k) \leq k$
  for each classical family $\Delta$
  and all integers $k \geq 2$.
  As we elaborate in Section \ref{sec:exponential-growth},
  we can only deduce logarithmic lower bounds
  on $n_{\min}(\Delta, k)$
  for each classical family $\Delta$
  and all integers $k \geq 2$
  due to the rapid growth
  of maximal Dynkin friezes.
  We leave it as an open question
  to determine
  the asymptotic behavior of
  $n_{\min}(\Delta, k)$
  for each classical family $\Delta$.
\end{remark}

\subsection*{Acknowledgements}

The author is grateful to Kannan Soundararajan
for posing a question at
the Stanford Number Theory Seminar
that motivated this study of
maximal frieze entries of types $B_n$ and $D_n$.
Thanks are also due to Yeuk Hay Joshua Lam and Gregorio Baldi for a discussion
that helped identify an error
in Example~\ref{ex:D5-frieze},
to Jon Cheah for a suggestion that led to
a more complete Figure~\ref{table:maximal-frieze-data},
and to the anonymous referees for comments
that helped refine the paper's exposition.

The author is supported
by the National Science Foundation
under Grant No. DMS-2303280.

%%%%%%%%%%%%%%%%%%%%%%%%%%%%%%%%%%%%%%%%%%%%%%%%%%%%%%%%%%%%%%%%%%

\numberwithin{equation}{section}
\numberwithin{theorem}{section}
\numberwithin{figure}{section}

%%%%%%%%%%%%%%%%%%%%%%%%%%%%%%%%%%%%%%%%%%%%%%%%%%%%%%%%%%%%%%%%%%

\section{Frieze patterns and frieze points}
\label{sec:frieze-review}

\subsection{Frieze patterns}
\label{sec:frieze-patterns}

While Dynkin friezes originate in the theory of cluster algebras, they can be defined purely combinatorially via recurrence relations. 

\begin{definition}[Dynkin frieze]
  \label{def:dynkin-frieze}
  Fix a finite Dynkin type $\Delta_n$
  with generalized Cartan matrix
  $C = (c_{i,j})_{1 \leq i, j \leq n}$.
  A \textit{positive integral frieze
  of type $\Delta_n$}
  is an array of positive integers
  $(x_{i,j})_{1 \leq i \leq n, j \in \Z}$
  satisfying the unimodular relation
  \[
    x_{i, j} x_{i, j + 1}
      - \prod_{k < i} x_{k, j + 1}^{\verts{c_{k,i}}}
        \prod_{k > i}x_{k,j}^{\verts{c_{k,i}}} = 1
  \]
  for all $i \in \{1, \ldots, n\}$ and $j \in \Z$.
\end{definition}
Typically, a Dynkin frieze is visually represented
as an array with $n$ rows and
(countably) infinitely many diagonal columns
each in the shape of the given Dynkin diagram,
with positive integer values at each node
of each copy of the Dynkin diagram.
The array consists of $n$ rows
and infinitely many columns laid out diagonally,
with additional virtual top and bottom rows
consisting entirely of ones.
The general unimodular relation
in Definition \ref{def:dynkin-frieze}
dictates the local sub-diamond conditions
for different Dynkin types,
e.g. the $\SL_2$ condition for classical
$A_n$ friezes in Array \ref{eq:An-frieze}.
We now specialize this to the two classical families of interest in this paper.

For type $B_n$ ($n \geq 2$),
the Cartan matrix dictates
that the usual $\SL_2$ rule is
modified at the double edge of the Dynkin diagram.
With the Cartan matrix and
corresponding labeling
for the Dynkin diagram of type $B_n$
\[
  \setlength{\arraycolsep}{4pt} 
  \renewcommand{\arraystretch}{0.5}
  C = \paren{
    \begin{array}{ccccc}
     2 & -2 & & & \\
    -1 & 2 & -1 & \phantom{\ddots} & \\
      & -1 & \ddots & \ddots & \\
      & & \ddots & 2 & -1 \\
      & & \phantom{\ddots} & -1 & 2
   \end{array}}
   \qquad \text{and} \qquad
   \dynkin[
    edge length=1.2cm,
    labels={n, 3, 2, \textcolor{red}{1}},
    backwards
  ]{B}{o.ooo},
\]
we visually represent this asymmetry
by coloring the first row of the array red.
The entire array has two overlapping
types of diamonds:
the modified condition
$ad - \textcolor{red}{b}^2 c = 1$
applies exclusively
to sub-diamonds whose top entry lies
in the (red) first row,
while the classical rule $ad - bc = 1$
is applied for all other adjacent sub-diamonds
further down the array.
Hence a positive integral frieze
$F = (x_{i, j})_{1 \leq i \leq n, j \in \Z}$
of type $B_n$ is represented by an array
\begin{equation}
  \label{eq:Bn-frieze}
  \begin{tikzcd}[row sep=.01in, column sep = .03in, font={\small}]
      \ldots & & 1 & & 1 & & 1 & & 1 & & 1 & & \ldots \\
      & \ldots & & \textcolor{red}{x_{1, 1}} & & \textcolor{red}{x_{1, 2}} & & \textcolor{red}{x_{1, 3}} & & \textcolor{red}{x_{1, 4}} & & \ldots \\
      \ldots & & x_{2, 0} & & x_{2, 1} & & x_{2, 2} & & x_{2, 3} & & x_{2, 4} & & \ldots \\
      & \ldots & & x_{3, 0} & & x_{3, 1} & & x_{3, 2} & & x_{3, 3} & & \ldots \\
      & & \ddots & & \ddots & & \ddots & & \ddots & & \ddots & & \ddots \\
      & \ldots & & x_{n, -1} & & x_{n, 0} & & x_{n, 1} & & x_{n, 2} & & \ldots \\
      \ldots & & 1 & & 1 & & 1 & & 1 & & 1 & & \ldots
  \end{tikzcd}
\end{equation}
whose sub-diamonds
\[
  \begin{tikzcd}[row sep=.01in, column sep = .03in, font={\small}]
    & b & \\
    a & & d \\
    & c & \\
  \end{tikzcd}
  \qquad \text{ and } \qquad
  \begin{tikzcd}[row sep=.01in, column sep = .03in, font={\small}]
    & \textcolor{red}{f} & \\
    e & & h \\
    & g & \\
  \end{tikzcd}
\]
satisfy the equalities $ad - bc = 1$ and $eh - \textcolor{red}{f}^2 g = 1$, respectively.

\begin{example}
  \label{ex:B6-frieze}
  The following is a frieze of type $B_6$
  with largest entry
  $F_{7} F_{8} - 1 = 272$.
  We insert dashes in the first diagonal ``column''
  to illustrate the appearance of
  the Dynkin diagram of type $B_6$.
  \begin{equation*}
    \begin{tikzcd}[row sep=.015in, column sep = .05in, font={\small}]
        \ldots & & 1 & & 1 & & 1 & & 1 & & 1 & & 1 & & 1 & & \ldots \\
        & \ldots & & \textcolor{red}{13} & & \textcolor{red}{21} & & \textcolor{red}{8} & & \textcolor{red}{3} & & \textcolor{red}{1} & & \textcolor{red}{2} & & \textcolor{red}{5} & &  \ldots \\
        \ldots & & 64 & & 272 \arrow[ul, Rightarrow] \arrow[dr, no head] & & 167 & & 23 & & 2 & & 1 & & 9 & & \ldots \\
        & \ldots & & 103 & & 103 \arrow[dr, no head] & & 60 & & 5 & & 1 & & 2 & & 23 & & \ldots \\
        \ldots & & 37 & & 39 & & 37 \arrow[dr, no head] & & 13 & & 2 & & 1 & & 5 & & \ldots \\
        & \ldots & & 14 & & 14 & & 8 \arrow[dr, no head] & & 5 & & 1 & & 2 & & 8 & & \ldots \\
        \ldots & & 3 & & 5 & & 3 & & 3 & & 2 & & 1 & & 3 & & \ldots \\
        & \ldots & & 1 & & 1 & & 1 & & 1 & & 1 & & 1 & & 1 & & \ldots 
    \end{tikzcd}
  \end{equation*}
  Notice that all sub-diamonds
  $\begin{smallmatrix}
    & \textcolor{red}{f} & \\
    e & & h \\
    & g &
  \end{smallmatrix}$
  involving the first row (red) at the top satisfy
  $eh - f^2g = 1$, while all other sub-diamonds
  $\begin{smallmatrix}
    & b & \\
    a & & d \\
    & c &
  \end{smallmatrix}$
  satisfy $ad - bc = 1$.
\end{example}

For type $D_n$ ($n \geq 4$), the Dynkin diagram
contains a single bifurcation at the trivalent node.
With the Cartan matrix and corresponding labeling
for the Dynkin diagram of type $D_n$
\[
  \setlength{\arraycolsep}{4pt} 
  \renewcommand{\arraystretch}{0.5}
  C = \paren{
    \begin{array}{ccccccc}
     2 & & -1 & \phantom{\ddots} & & \\
       & 2 & -1 & \phantom{\ddots} & & \\
    -1 & -1 & 2 & -1 & \phantom{\ddots} \\
      & & -1 & \ddots & \ddots & \\
      & & & \ddots & 2 & -1 \\
      & & & \phantom{\ddots} & -1 & 2 & \\
   \end{array}}
   \qquad \text{and}\qquad
   \dynkin[
    edge length=1.2cm,
    labels={n, \textcolor{violet}{4}, \textcolor{blue}{3}, \textcolor{red}{1}, \textcolor{orange}{2}},
    backwards
  ]{D}{o.oooo},
\]
the general $D_n$ frieze incorporates the
bifurcation by interlacing the first two rows.
These shapes overlap to tile the entire array:
the $5$-term relation is applied exactly where the
interlaced rows $1$, $2$, $3$, and $4$ meet,
whereas all other adjacent sub-diamonds in
rows $3$ through $n$ follow
the $ad - bc = 1$ rule.
Hence a positive integral frieze
$F = (x_{i, j})_{1 \leq i \leq n, j \in \Z}$
of type $D_n$ is represented by an array
\begin{equation}
  \label{eq:Dn-frieze}
  \begin{tikzcd}[row sep=.015in, column sep = .05in, font={\small}]
      \ldots & & 1 & & 1 & & 1 & & 1 & & 1 & & \ldots \\
      & \ldots & & \textcolor{orange}{x_{2, 1}} & & \textcolor{orange}{x_{2, 2}} & & \textcolor{orange}{x_{2, 3}} & & \textcolor{orange}{x_{2, 4}} & & \ldots & & \\
      \ldots & & \textcolor{blue}{x_{3, 0}} & \textcolor{red}{x_{1, 1}} & \textcolor{blue}{x_{3, 1}} & \textcolor{red}{x_{1, 2}} & \textcolor{blue}{x_{3, 2}} & \textcolor{red}{x_{1, 3}}& \textcolor{blue}{x_{3, 3}} & \textcolor{red}{x_{1, 4}}& \textcolor{blue}{x_{3, 4}} & & \ldots \\
      & \ldots & & \textcolor{violet}{x_{4, 0}} & & \textcolor{violet}{x_{4, 1}} & & \textcolor{violet}{x_{4, 2}} & & \textcolor{violet}{x_{4, 3}} & & \ldots & & \\
      \ldots & & x_{5, -1} & & x_{5, 0} & & x_{5, 1} & & x_{5, 2} & & x_{5, 3} & & \ldots \\
      & \ddots & & \ddots & & \ddots & & \ddots & & \ddots & & \ddots \\
      \ldots & & x_{n, -2} & & x_{n, -1} & & x_{n, 0} & & x_{n, 1} & & x_{n, 2} & & \ldots \\
      & \ldots & & 1 & & 1 & & 1 & & 1 & & \ldots
  \end{tikzcd}
\end{equation}
whose sub-diamonds
\[
  \begin{tikzcd}[row sep=.01in, column sep = .03in, font={\small}]
    & b & \\
    a & & d \\
    & c & \\
  \end{tikzcd}
  \qquad \text{ and } \qquad
  \begin{tikzcd}[row sep=.01in, column sep = .03in, font={\small}]
    & \textcolor{orange}{f} & \\
    \textcolor{blue}{e} & \textcolor{red}{g} & \textcolor{blue}{i} \\
    & \textcolor{violet}{h} & \\
  \end{tikzcd}
\]
satisfy the equalities $ad - bc = 1$ and $\textcolor{blue}{e}\textcolor{blue}{i} - \textcolor{orange}{f}\textcolor{red}{g}\textcolor{violet}{h} = 1$, respectively.

\begin{example}
  \label{ex:D5-frieze}
  The following is a frieze of type $D_5$
  with largest entry
  $F_{5} F_{6} - 1 = 39$.
  We insert dashes in the first diagonal ``column''
  to illustrate the appearance of
  the Dynkin diagram of type $D_5$.
  \begin{equation}
  \label{eq:D5-frieze}
    \begin{tikzcd}[
      row sep=.015in,
      column sep=.05in,
      font=\small
    ]
      \ldots & & 1 & & 1 & & 1 & & 1 & & 1 & & 1 & & 1 & & \ldots \\
      & \ldots & & \textcolor{orange}{8} \arrow[dr, no head] & & \textcolor{orange}{5} & & \textcolor{orange}{2} & & \textcolor{orange}{1} & & \textcolor{orange}{3} & & \textcolor{orange}{8} & & \ldots \\
      \ldots & & \textcolor{blue}{23} & \textcolor{red}{8} & \textcolor{blue}{39} \arrow[dr, no head] \arrow[l, no head] & \textcolor{red}{5} & \textcolor{blue}{9} & \textcolor{red}{2}& \textcolor{blue}{1} & \textcolor{red}{1}& \textcolor{blue}{2} & \textcolor{red}{3}& \textcolor{blue}{23} & \textcolor{red}{8}& \textcolor{blue}{39} &  & \ldots \\
      & \ldots & & \textcolor{violet}{14} & & \textcolor{violet}{14} \arrow[dr, no head] & & \textcolor{violet}{2} & & \textcolor{violet}{1} & & \textcolor{violet}{5} & & \textcolor{violet}{14} & & \ldots \\
      \ldots & & 3 & & 5 & & 3 & & 1 & & 2 & & 3 & & 5 & & \ldots \\
      & \ldots & & 1 & & 1 & & 1 & & 1 & & 1 & & 1 & & \ldots
    \end{tikzcd}
  \end{equation}
  Notice that all $5$-term sub-diamonds
  $\begin{smallmatrix}
      & \textcolor{orange}{f} & \\
    \textcolor{blue}{e} & \textcolor{red}{g} & \textcolor{blue}{i} \\
    & \textcolor{violet}{h} & \\
  \end{smallmatrix}$
  involving the first row (red)
  in the middle satisfy $ei - fgh = 1$,
  while all other $4$-term sub-diamonds
  $\begin{smallmatrix}
    & b & \\
    a & & d \\
    & c &
  \end{smallmatrix}$
  satisfy $ad - bc = 1$.
\end{example}

\begin{remark}
  An interactive tool for
  generating examples of frieze patterns
  of small rank
  is available at \cite{visual-ca}.
\end{remark}

\subsection{Frieze points}
\label{sec:frieze-points}
We recall the correspondence between Dynkin friezes
and points on affine cluster varieties,
largely following the presentation
in \cite{zhang-2025}.
It will be very useful to view friezes
as points on the following affine variety
(ultimately coming from the
lower bound model
in the theory of acyclic cluster algebras
\cite{fomin-zelevinsky-3}).
\begin{definition}
  \label{def:frieze-variety}
  For a Dynkin type $\Delta_n$
  with generalized Cartan matrix
  $C = (c_{i,j})_{1 \leq i, j \leq n}$,
  consider the $n$ polynomials
  \[
    f_{\Delta_n, i}
      := x_i y_{i}
        - \prod_{j = 1}^{i - 1} x_j^{-c_{j, i}}
        - \prod_{j = i + 1}^n x_j^{-c_{j, i}}
        \in \Z[x_1, \ldots, x_n, y_1, \ldots, y_n],
  \]
  and define the affine variety
  $Y_{\Delta_n} := V(f_{{\Delta_n}, 1}, \ldots, f_{{\Delta_n}, n})$
  to be their zero locus.
\end{definition}

If $x \in Y_{\Delta_n}$ then we call
$x$ a \textit{frieze point} of type ${\Delta_n}$.
When we say 
$x = (x_1, \ldots, x_n; y_1, \ldots, y_n)
\in Y_{\Delta_n}$ is a \textit{positive integral point}, we simply mean that
each coordinate $x_i$ and $y_i$
is a positive integer.
There is a bijection between $\frieze({\Delta_n}, \N)$
and the set $Y_{\Delta_n}(\N)$
of positive integral frieze points
due to de Saint Germain--Huang--Lu \cite{dhl}
(cf. \cite[Proposition 2.4]{zhang-2025});
we use this identification
between friezes and frieze points throughout.
\begin{proposition}[{\cite[Section 6.3]{dhl}}]
  \label{prop:correspondence}
  Let ${\Delta_n}$ be a finite Dynkin type.
  There is a one-to-one correspondence between
  the $\frieze({\Delta_n}, \N)$
  and $Y_{\Delta_n}(\N)$.
\end{proposition}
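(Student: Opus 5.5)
The correspondence is cited from de Saint Germain--Huang--Lu, so the plan is to reconstruct their bijection directly. The map goes from a frieze to the frieze point read off one "slice" of the array. Given $F = (x_{i,j}) \in \frieze(\Delta_n,\N)$, set $x_i := x_{i,1}$ and $y_i := x_{i,2}$. The unimodular relation of Definition~\ref{def:dynkin-frieze} at $j=1$ then reads
\[
x_i y_i = 1 + \prod_{k<i} y_k^{|c_{k,i}|}\prod_{k>i} x_k^{|c_{k,i}|}.
\]
This is the mutation relation along an acyclic (bipartite-type) orientation, and it is not literally $f_{\Delta_n,i}=0$. So I would not use this naive slice. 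Instead I would use the lower bound presentation of Berenstein--Fomin--Zelevinsky \cite{fomin-zelevinsky-3}: for an acyclic seed, the cluster algebra is generated by $x_1,\dots,x_n,x_1',\dots,x_n'$, where $x_i'$ is obtained by mutating the initial seed at $i$. The exchange relations are exactly
\[
x_i x_i' = \prod_{j\to i} x_j^{|c_{j,i}|} + \prod_{i\to j} x_j^{|c_{j,i}|},
\]
and these generate the full ideal of relations. With the orientation $j\to i$ for $j<i$, these are precisely the polynomials $f_{\Delta_n,i}$ of Definition~\ref{def:frieze-variety}. Thus $Y_{\Delta_n}$ is $\operatorname{Spec}$ of the cluster algebra $\mathcal{A}$ with trivial coefficients, and $\Z$-points of $Y_{\Delta_n}$ are ring homomorphisms $\mathcal{A}\to\Z$.

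The proof then has three steps.

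\emph{Step 1.} Identify $\frieze(\Delta_n,\N)$ with ring homomorphisms $\varphi:\mathcal{A}\to\Z$ that are positive on all cluster variables. The entries $x_{i,j}$ of a frieze are the images of the cluster variables obtained by iterating the source-sequence mutations (Coxeter transformation). In finite type these exhaust all cluster variables, by Fomin--Zelevinsky finite type theory. Conversely, the unimodular relations are exactly the exchange relations along this sequence. So a frieze determines the values of $\varphi$ on all cluster variables, compatibly with all relations, and hence determines $\varphi$; and every such $\varphi$ produces a frieze.

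\emph{Step 2.} Send $\varphi$ to $(\varphi(x_i);\varphi(x_i'))\in Y_{\Delta_n}(\Z)$. This is bijective onto $Y_{\Delta_n}(\Z)$ because the lower bound equals the cluster algebra in the acyclic case \cite{fomin-zelevinsky-3}, so $\mathcal{A}\cong \Z[x,y]/(f_{\Delta_n,1},\dots,f_{\Delta_n,n})$. The images of the $x_i,x_i'$ are positive whenever $\varphi$ is positive on cluster variables, so friezes land in $Y_{\Delta_n}(\N)$.

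\emph{Step 3 (main obstacle).} Show the converse positivity: if all $2n$ coordinates are positive integers, then every cluster variable evaluates positively. The Laurent phenomenon alone does not give this, since $x_i'$ is not part of one cluster with all $x_j$. My approach is to exhibit a seed all of whose cluster variables lie among $\{x_i\}\cup\{x_i'\}$. Mutating at all sinks of the chosen orientation simultaneously (they commute) produces the cluster $\{x_i' : i \text{ a sink}\}\cup\{x_j : j \text{ not a sink}\}$. Its values are positive, so by the Laurent phenomenon with positive coefficients every cluster variable is a subtraction-free Laurent polynomial in them and is therefore a positive rational. It is also an integer, because $\varphi$ is a ring map to $\Z$.

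The delicate point is justifying positivity of Laurent coefficients, which is the positivity theorem (Lee--Schiffler, or in finite type via $F$-polynomials). Alternatively, one can argue directly by propagating the frieze relations $x_{i,j+1} = (1+\cdots)/x_{i,j}$ from a positive slice, since every step is subtraction-free. This direct propagation is the argument I would actually write, with positivity coming for free from the recurrence.
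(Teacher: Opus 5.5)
Your proposal is correct and is essentially the standard argument behind the citation to \cite[Section 6.3]{dhl}; the paper itself gives no proof beyond that reference. In outline: friezes are ring homomorphisms $\mathcal{A}\to\Z$ positive on all cluster variables, the Berenstein--Fomin--Zelevinsky lower-bound presentation for the acyclic seed identifies ring homomorphisms $\mathcal{A}\to\Z$ with $Y_{\Delta_n}(\Z)$, and positivity propagates through the subtraction-free frieze recurrence.

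Two small tightenings are worth making. First, in Step~1 the existence of $\varphi$ should come from restricting to $\mathcal{A}$ the evaluation $x_i\mapsto x_{i,1}$ on the Laurent ring of the initial cluster; it then agrees with the frieze entries by induction along the recurrence. This is the justification needed, rather than an appeal to ``compatibility with all relations.'' Second, the obstacle you raise in Step~3 is illusory, because the initial cluster $\{x_1,\dots,x_n\}$ already takes positive values. So the sink-mutated seed and Lee--Schiffler positivity are unnecessary, and positivity of the $y_i$ follows automatically from that of the $x_i$.
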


Concretely, a point $(x_1, \ldots, x_n; y_1, \ldots y_n) \in Y_{\Delta_n}(\N)$
corresponds to a positive integral frieze of type ${\Delta_n}$
with first diagonal $F_{i, 1} = x_i$ and
$n$ other entries given by the $y_i$.

\subsubsection*{Type $B_n$}
The affine variety $Y_{B_n}$
is the vanishing locus of the system
of equations:
\begin{align}
  x_1 y_1 &= x_2 + 1, \tag{B1}\label{eq:B1}\\
  x_2 y_2 &= x_1^2 + x_3, \tag{B2}\label{eq:B2}\\
  x_i y_i &= x_{i-1} + x_{i+1} \quad (3 \le i \le n-1), \tag{B3}\label{eq:B3}\\
  x_n y_n &= x_{n-1} + 1. \tag{B4}\label{eq:B4}
\end{align} 

\begin{example}
  \label{ex:B6-frieze-point}
  The positive integral frieze of type $B_6$
  in Example \ref{ex:B6-frieze}
  corresponds to the point
  \[
    x = (13, 272, 103, 37, 8, 3; 21, 1, 3, 3, 5, 3) \in Y_{B_6}(\N).
  \]
  Its horizontal translation by one diagonal
  (i.e. its image under the cluster Donaldson--Thomas transformation $\tau$)
  corresponds to the point
  \[
    \tau(x) = (21, 167, 60, 13, 5, 2; 8, 3, 3, 5, 3, 3) \in Y_{B_6}(\N).
  \]
\end{example}

\subsubsection*{Type $D_n$}
The affine variety $Y_{D_n}$
is the vanishing locus of the system
of equations:
\begin{align}
  x_1 y_1 &= x_3 + 1, \tag{D1}\label{eq:D1}\\
  x_2 y_2 &= x_3 + 1, \tag{D2}\label{eq:D2}\\
  x_3 y_3 &= x_1 x_2 + x_4, \tag{D3}\label{eq:D3}\\
  x_i y_i &= x_{i-1} + x_{i+1} \quad (4 \le i \le n-1), \tag{D4}\label{eq:Dmid}\\
  x_n y_n &= x_{n-1} + 1. \tag{D5}\label{eq:Dlast}
\end{align}

\begin{example}
  \label{ex:D5-frieze-point}
  The positive integral frieze of type $D_5$
  in Example \ref{ex:D5-frieze}
  corresponds to the point
  \[
    x = (8, 8, 39, 14, 3; 5, 5, 2, 3, 5) \in Y_{D_5}(\N).
  \]
  Its horizontal translation by one diagonal
  (i.e. its image under the cluster Donaldson--Thomas transformation $\tau$)
  corresponds to the point
  \[
    \tau(x) = (5, 5, 9, 2, 1; 2, 2, 3, 5, 3) \in Y_{D_5}(\N).
  \]
\end{example}

\section{Universality}
\label{sec:universality}

We give an elementary
construction of positive integral friezes
to prove Theorem \ref{thm:universality}
and show that all positive integers
appear in Dynkin friezes.
\begin{proof}[Proof of Theorem \ref{thm:universality}]
  For any integer $n \geq 2$,
  consider the following points in the affine varieties
  $Y_{{\Delta_n}}(\N)$, which correspond to friezes via Proposition \ref{prop:correspondence}:
  \begin{align*}
    (n + 1, n, n-1, \ldots, 2; 1, 2, 2, \ldots, 2)
      &\in Y_{A_n}(\N), \\
    (n + 1, n, n-1, \ldots, 3, 2; 1, n + 3, 2, \ldots, 2)
      &\in Y_{B_n}(\N), \\
    (n^2 + 1, n, n-1, \ldots, 3, 2; 1, n + 1, 2, \ldots, 2)
      &\in Y_{C_n}(\N), \\
    (n, n, n-1, n-2, \ldots, 3, 2; 1, 1, n + 2, 2, \ldots, 2)
      &\in Y_{D_n}(\N).
  \end{align*}
  The arbitrary integer $n$ is a coordinate in each of these points,
  and hence there is a frieze of type $A_n$,
  $B_n$, $C_n$, and $D_n$
  containing $n$ as an entry
  by Proposition \ref{prop:correspondence}.
\end{proof}

\begin{remark}
  An immediate consequence of
  Theorem \ref{thm:universality}
  is that all prime numbers
  appear as entries of friezes.
  We note that \textit{a priori},
  the Fibonacci prime conjecture
  would imply the infinitude of prime numbers
  that appear as entries of friezes
  of type $A_n$,
  which we will call \textit{frieze primes}
  of type $A_n$.
  A result of Cheah--de Saint Germain
  \cite[Theorem 1]{cheah-dsg}
  says that for every positive integer $n > 1$,
  there is a positive integral frieze of type $A_n$ (resp. $C_n$)
  with largest entry $F_{n+2}$ (resp. $F_{2n+1}$).
  Hence, the value set of positive integral friezes
  of type $A_n$
  contains all Fibonacci primes.
  While there are heuristics in analytic number theory
  on whether there are infinitely many Fibonacci primes
  (cf. \cite{bugeaud-luca-mignotte-siksek,grantham-granville}),
  the problem is still open (cf. \cite[Section 4.13]{cai}).
\end{remark}

%%%%%%%%%%%%%%%%%%%%%%%%%%%%%%%%%%%%%%%%%%%%%%%%%%%

\section{Maximal friezes}
\label{sec:maximal}

\subsection{Maximal \texorpdfstring{$B_n$}{Bn} friezes}
We now turn to the first part of
Theorem \ref{thm:max-bound},
which follows from
Proposition \ref{prop:correspondence}
and the following
explicit construction of
positive integral frieze points of type $B_n$
with maximal coordinate $F_{n+1}F_{n+2} - 1$.
Recall that the $n$-dimensional affine variety
$Y_{B_n} \subset \A^{2n}$
is defined by Equations \ref{eq:B1}--\ref{eq:B4}.

\begin{theorem}
  \label{thm:Bn-construction-fibonacci}
  Let $n \ge 3$.
  There exists a positive integral point
  \[
    x = (x_1, \ldots, x_n; y_1, \ldots, y_n) \in Y_{B_n}(\N)
  \]
  such that
  \[
    x_2 = F_{n+1} F_{n+2} - 1.
  \]
  Moreover, $x_2$ is the largest coordinate
  of the point $x$.
\end{theorem}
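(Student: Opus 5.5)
The plan is to write down an explicit point of $Y_{B_n}(\N)$ modelled on Example \ref{ex:B6-frieze-point}, and then check Equations \ref{eq:B1}--\ref{eq:B4} one at a time. The $B_6$ point $(13, 272, 103, 37, 8, 3; 21, 1, 3, 3, 5, 3)$ suggests the following choices.

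\emph{Head.} Take $x_1 = F_{n+1}$ and $y_1 = F_{n+2}$. Then Equation \ref{eq:B1} forces $x_2 = F_{n+1}F_{n+2} - 1$, which is the desired value.

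\emph{Second node.} Take $y_2 = 1$. Then Equation \ref{eq:B2} forces
\[
  x_3 = x_2 - x_1^2 = F_{n+1}(F_{n+2} - F_{n+1}) - 1 = F_nF_{n+1} - 1 .
\]
Both checks are immediate from the Fibonacci recurrence.

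\emph{Middle chain.} The remaining coordinates $x_3, \ldots, x_n$ must form a type-$A$-like chain: $x_i \mid x_{i-1} + x_{i+1}$ for $3 \le i \le n-1$, and $x_n \mid x_{n-1} + 1$. I would define them by the backward recurrence $x_{i+1} = y_i x_i - x_{i-1}$ with $y_i = 3$ for $3 \le i \le n-2$. For example,
\[
  x_4 = 3x_3 - x_2 = F_{n+1}(2F_n - F_{n+1}) - 2 = F_{n+1}F_{n-2} - 2,
\]
which gives $37$ when $n = 6$. The recurrence $z_{k+1} = 3z_k - z_{k-1}$ is satisfied by every other Fibonacci number, $z_k = F_{2k+c}$. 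So I expect a closed form of the shape
\[
  x_{3+k} = F_{n+1}F_{n-2k} - (\text{a small correction, linear combination of } F_{2k}, F_{2k+1}) ,
\]
and I would find it by solving the linear recurrence with the two initial values $x_2, x_3$ and simplifying with the identities $F_aF_b - F_{a-1}F_{b+1} = (-1)^{b+1}F_{a-b-1}$ (d'Ocagne/Vajda type).

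\emph{Tail.} The example closes with $y_{n-1} = 5$, $y_n = 3$, $x_{n-1} = 8$, $x_n = 3$. So I would prove, by induction on $n$ or directly from the closed form, that the recurrence lands on $(x_{n-1}, x_n) = (8, 3)$. Then Equation \ref{eq:B3} at $i = n-1$ gives $y_{n-1} = (x_{n-2} + 3)/8$, and Equation \ref{eq:B4} holds because $3 \cdot 3 = 8 + 1$. The small cases $n = 3, 4$, where the middle stretch is empty or very short, would be checked by hand, since the boundary pattern may differ there.

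\emph{Main obstacle.} The hard step is pinning down the correct closed form, uniform in $n$, for the middle chain and its terminal values. One has to show simultaneously that every $x_i$ is a positive integer, that the divisibility at the junction $i = n-1$ works, and that the chain hits exactly $x_n \mid x_{n-1} + 1$. If the pure $y_i = 3$ pattern fails for some parities of $n$, my fallback is to build the chain from the tail upward instead. That is, start from $(x_n, x_{n-1}) = (3, 8)$ and run $x_{i-1} = y_i x_i - x_{i+1}$, choosing the $y_i$ so that the chain reaches $x_3 = F_nF_{n+1} - 1$ and $x_2 = F_{n+1}F_{n+2} - 1$ with $x_3 \mid x_2 + x_4$. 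This is equivalent to exhibiting a suitable continued-fraction (frieze-of-type-$A$) expansion of the ratio $x_2/x_3$.

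\emph{Maximality of $x_2$.} Once the point is fixed, this should follow from the chain: $x_1 < x_2$ trivially, and $x_2 > x_3 > x_4 > \cdots$ because each step satisfies $x_{i+1} = y_i x_i - x_{i-1}$ with $y_i \ge 2$ and the sequence is decreasing from $x_3$ on. The $y_i$ are small (at most $F_{n+2}$, which is less than $x_2$). Finally, the point gives a frieze by Proposition \ref{prop:correspondence}.
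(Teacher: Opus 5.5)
Your plan has a genuine gap at the middle-chain/tail step. The pattern $y_3=\cdots=y_{n-2}=3$, $y_{n-1}=5$ fails for every $n\ge 8$, and parity plays no role. Solving your recurrence exactly, the correction term is $F_{2k+1}$:
\[
  x_{3+k}=F_{n+1}F_{n-2k}-F_{2k+1}\qquad (k\ge -1),
\]
using the usual extension of $F$ to negative indices. The first term decays while the second grows, so the pure-$3$ chain stays positive for only about $n/2$ steps. But you need $n-4$ steps to get from $x_3$ to $x_{n-1}$. Indeed $x_{n-1}=F_{n+1}F_{8-n}-F_{2n-7}$, which equals $8$ only for $n=6,7$. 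For $n=8$ the chain $x_2,\dots,x_7$ is $1869,713,270,97,21,-34$. The $B_6$ example misled you because for $n=6,7$ the $5$ happens to sit at position $n-1$.

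Your fallback is to choose the $y_i$ going up from $(x_{n-1},x_n)=(8,3)$ so as to land on $x_3,x_2$ after exactly the right number of steps, i.e.\ to find a negative continued fraction of $x_2/x_3$ of the right shape and length. That restates what has to be proved: it neither identifies the $y_i$ nor shows the length comes out right.

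The missing idea is to place the single $5$ in the \emph{middle}, at $t(n)=3+\lfloor (n-1)/2\rfloor$, with $y_i=3$ for all other $3\le i\le n-1$. Your closed form explains why this works. Write $n=2m$ or $n=2m+1$. The pure-$3$ chain reaches its last positive term $x_{t(n)}=F_{2m}$, and one checks $x_{t(n)+1}=5F_{2m}-x_{t(n)-1}=F_{2m-2}$. After this reflection, $y_i=3$ walks down the even-indexed Fibonacci numbers, $(F_{2j},F_{2j-2})\mapsto(F_{2j-2},F_{2j-4})$. It arrives at $(x_{n-1},x_n)=(F_6,F_4)=(8,3)$ exactly at index $n$. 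So $y_n=3$, and $y_{n-1}=3$ rather than $5$ once $n\ge 8$. This is precisely the paper's factorization $A_{n-1}\cdots A_3=B^{\ell_2}CB^{\ell_1}$ with $B^{\ell_2}CB^{\ell_1}w_3=(x_n,x_{n-1})=(3,8)$.

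Your monotonicity argument should also run backward from the tail. From $x_{n-1}>x_n$ and $y_i\ge 3$ one gets $x_{i-1}=y_ix_i-x_{i+1}>2x_i$. Running forward from $x_3$, the condition $y_i\ge 2$ alone does not keep the chain positive, as the $n=8$ computation shows.
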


\subsubsection*{Proof of Theorem \ref{thm:Bn-construction-fibonacci}}

\,

\medskip\noindent
\textit{Small cases.}
First, we prove Theorem \ref{thm:Bn-construction-fibonacci}
for small $n$ explicitly.
By a direct computation on $Y_{B_n}(\N)$
(see Appendix \ref{app:maximal}),
the following tuples $(x_1, \ldots x_n; y_1, \ldots, y_n)$
are positive integral solutions of Equations \ref{eq:B1}--\ref{eq:B4}:

\[
  \begin{array}{ll}
    B_3: & (3,14,5;\,5,1,3),\\[0.5mm]
    B_4: & (5,39,14,3;\,8,1,3,5),\\[0.5mm]
    B_5: & (8,103,39,14,3;\,13,1,3,3,5),\\[0.5mm]
    B_6: & (13,272,103,37,8,3;\,21,1,3,3,5,3),\\[0.5mm]
    B_7: & (21,713,272,103,37,8,3;\,34,1,3,3,3,5,3),\\[0.5mm]
    B_8: & (34,1869,713,270,97,21,8,3;\,55,1,3,3,3,5,3,3),\\[0.5mm]
    B_9: & (55,4894,1869,713,270,97,21,8,3;\,89,1,3,3,3,3,5,3,3).
  \end{array}
\]

Furthermore, these tuples uniquely determine friezes
by Proposition \ref{prop:correspondence}.
In each case $x_2=F_{n+1}F_{n+2}-1$.
This proves Theorem \ref{thm:Bn-construction-fibonacci}
for $3 \leq n \leq 9$.
For the remainder of the proof we assume $n \geq 10$
and give a uniform construction and 
analysis for all larger $n$ using Fibonacci identities and $2\times 2$ matrices.

\medskip\noindent
\textit{An explicit frieze for large $n$.}
Set
\begin{equation}\label{eq:front}
  x_1 := F_{n+1},\quad y_1 := F_{n+2},\quad
  x_2 := F_{n+1}F_{n+2}-1,\quad y_2:=1.
\end{equation}
Then Equations \ref{eq:B1} and \ref{eq:B2} hold; furthermore
\begin{equation}\label{eq:x3}
  x_3 := x_2y_2 - x_1^2 = F_{n+1}F_n - 1.
\end{equation}

Define
\[
  t(n):=3+\Big\lfloor\frac{n-1}{2}\Big\rfloor,
\]
and prescribe
\begin{equation}\label{eq:y-pattern}
  y_i :=
  \begin{cases}
    F_{n+2}, & i=1,\\[0.5mm]
    1, & i=2,\\[0.5mm]
    3, & 3\le i\le n-1,\ i\neq t(n),\\[0.5mm]
    5, & i=t(n),
  \end{cases}
\end{equation}
leaving $y_n$ to be determined. Recursively define
\begin{equation}
  \label{eq:x-rec}
  x_{i+1}:=y_i x_i - x_{i-1}, \qquad 3 \leq i \leq n-1,
\end{equation}
so that Equation \ref{eq:B3} holds for $3 \leq i \leq n-1$.
This defines integers
$x_3, \ldots, x_n$
(\textit{a priori} rational numbers),
and finally set
\begin{equation}\label{eq:yn-def}
  y_n:=\frac{x_{n-1}+1}{x_n},
\end{equation}
so Equation \ref{eq:B4} holds identically.
To finish we must show $y_n \in \N$ and
$x_i > 0$ for all $i$.

\medskip\noindent
\textit{Integrality and positivity.}
We will encode various Fibonacci identities
using matrices.
For $i \geq 3$ set
\[
  A_i:=\begin{pmatrix}y_i&-1\\[0.5mm]1&0\end{pmatrix} \quad \text{ and } \quad
  w_i:=\begin{pmatrix}x_i\\ x_{i-1}\end{pmatrix}.
\]
Then Equation \ref{eq:x-rec} is $w_{i+1}=A_iw_i$, which yields the product
\begin{equation}
  \label{eq:wn-product}
  w_n
    = A_{n-1}\cdots A_3\, w_3,
  \qquad \text{ where }
  w_3
    =\begin{pmatrix}x_3\\ x_2\end{pmatrix}
    =\begin{pmatrix}F_{n+1}F_n-1\\[0.5mm]F_{n+1}F_{n+2}-1\end{pmatrix}.
\end{equation}

To evaluate this product, we split the transformation matrices into two types based on their trace. Let
\[
  B:=\begin{pmatrix}3&-1\\[0.5mm]1&0\end{pmatrix} \quad \text{ and } \quad
  C:=\begin{pmatrix}5&-1\\[0.5mm]1&0\end{pmatrix}.
\]
By the definition of $y_i$ in Equation \ref{eq:y-pattern}, the sequence of matrices $A_{n-1}\cdots A_3$ consists entirely of $B$ matrices, except for exactly one $C$ matrix located at index $t(n)$. Counting the number of $B$ matrices to the left and right of this $C$ gives an explicit factorization, which we formalize in the following lemma.

\begin{lemma}
  \label{lem:tail-computation}
  Let $n \geq 6$, let $w_3 = \begin{psmallmatrix}
            F_{n+1}F_n-1 \\
            F_{n+1}F_{n+2}-1
          \end{psmallmatrix} \in \Z^2$ as in
  Equation \ref{eq:wn-product},
  and define the pair of nonnegative integers
  \begin{align*}
    (\ell_1, \ell_2) :=
      \begin{cases}
        (m-1, m-3) & \text{ if } n = 2m, \\
        (m, m-3) & \text{ if } n = 2m + 1.
      \end{cases}
  \end{align*}
  Then the matrix product from Equation \ref{eq:wn-product} factors as
  \[
    A_{n-1}\cdots A_3 = B^{\ell_2} C B^{\ell_1},
  \]
  and furthermore
  \[
    B^{\ell_2} C B^{\ell_1} w_3 =
      \begin{pmatrix}
        3 \\
        8 
      \end{pmatrix}.
  \]
\end{lemma}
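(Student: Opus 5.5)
The plan has two parts: first the combinatorial factorization, then an explicit evaluation of the product using closed forms for powers of $B$.

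\textbf{Factorization.} By Equation \ref{eq:y-pattern}, the product $A_{n-1}\cdots A_3$ has a $C$ at index $t(n)$ and $B$'s everywhere else.
\begin{itemize}
\item To the right of $C$ (indices $3,\ldots,t(n)-1$) there are $t(n)-3=\lfloor (n-1)/2\rfloor$ factors of $B$. This equals $m-1$ if $n=2m$ and $m$ if $n=2m+1$.
\item To the left of $C$ (indices $t(n)+1,\ldots,n-1$) there are $n-1-t(n)=n-4-\lfloor (n-1)/2\rfloor$ factors of $B$. This equals $m-3$ in both parities.
\end{itemize}
The hypothesis $n\ge 6$ gives $m\ge 3$, so $\ell_2\ge 0$ and $3 \le t(n)\le n-1$, hence $C$ really occurs. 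This proves the factorization $A_{n-1}\cdots A_3=B^{\ell_2}CB^{\ell_1}$.

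\textbf{Closed forms for powers of $B$.} By induction on $k$, using $F_{2k+4}=3F_{2k+2}-F_{2k}$, I would show
\[
B^k=\begin{pmatrix}F_{2k+2}&-F_{2k}\\ F_{2k}&-F_{2k-2}\end{pmatrix}\quad (k\ge 0),
\]
with the convention $F_{-2}=-1$. Since $\det B=1$, this also gives
\[
B^{-k}=\begin{pmatrix}-F_{2k-2}&F_{2k}\\ -F_{2k}&F_{2k+2}\end{pmatrix}.
\]

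\textbf{Reduction to a single identity.} Since all matrices lie in $\SL_2(\Z)$, the claim $B^{\ell_2}CB^{\ell_1}w_3=(3,8)^T$ is equivalent to
\[
C\,B^{\ell_1}w_3 \;=\; B^{-\ell_2}\begin{pmatrix}3\\8\end{pmatrix}.
\]
I will compute both sides explicitly.

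The right side is
\[
\bigl(8F_{2\ell_2}-3F_{2\ell_2-2},\; 8F_{2\ell_2+2}-3F_{2\ell_2}\bigr)^T.
\]
Using $3=F_4$, $8=F_6$ and the addition formula $F_{a+b}=F_aF_{b+1}+F_{a-1}F_b$, each entry should collapse to a single Fibonacci number or a short sum of them, of index about $2\ell_2+3$ or $2\ell_2+5$.

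For the left side, write $w_3=(F_{n+1}F_n,\;F_{n+1}F_{n+2})^T-(1,1)^T$ and treat the two pieces separately.
\begin{itemize}
\item The constant piece: $B^{\ell_1}(1,1)^T=(F_{2\ell_1+2}-F_{2\ell_1},\;F_{2\ell_1}-F_{2\ell_1-2})^T=(F_{2\ell_1+1},\;F_{2\ell_1-1})^T$.
\item The quadratic piece: $B^{\ell_1}$ applied to $F_{n+1}(F_n,F_{n+2})^T$ gives entries $F_{n+1}\bigl(F_{2\ell_1+2}F_n-F_{2\ell_1}F_{n+2}\bigr)$ and the analogous expression with indices shifted by $2$. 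I would simplify these with Vajda's identity
\[
F_{a}F_{b}-F_{a-k}F_{b+k}=(-1)^{a-k}F_kF_{b-a+k}.
\]
\end{itemize}
Because $2\ell_1$ is either $n-2$ (when $n=2m$) or $n-1$ (when $n=2m+1$), the index differences are constants. So $B^{\ell_1}w_3$ should reduce to a vector of size roughly $F_{n+1}\cdot O(1)$ minus small Fibonacci terms, and after applying $C$ it should match the right side, whose entries have index near $2\ell_2\approx n-6$.

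\textbf{Main obstacle.} The hard step is this last bookkeeping: carrying the signs $(-1)^{a-k}$ and the two parity cases through Vajda's identity, and checking that the quadratic piece and the constant piece combine into exactly the right side. To limit index errors I would do the following.
\begin{itemize}
\item Treat $n=2m$ and $n=2m+1$ as separate cases and rewrite every index in terms of $m$.
\item Reduce every product $F_aF_b$ by the single formula $5F_aF_b=L_{a+b}-(-1)^bL_{a-b}$, with $L$ the Lucas numbers, so that both sides become linear combinations of $L_{4m+c}$ and constants.
\item Compare coefficients on both sides.
\item Sanity-check the result against the tabulated small cases $B_6,\ldots,B_9$, where $x_{n-1}=8$ and $x_n=3$ agree with the claimed vector $(3,8)^T$.
\end{itemize}
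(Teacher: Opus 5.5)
Your proposal is correct and follows essentially the paper's route: a closed form for $B^k$, Catalan/Vajda-type identities to collapse $B^{\ell_1}w_3$, then $C$ and the remaining power of $B$. The only cosmetic change is that you compare against $B^{-\ell_2}(3,8)^T$ instead of pushing forward. The bookkeeping you flag closes directly with your Vajda identity at $k=2$, with no Lucas numbers needed: $B^{\ell_1}w_3=(F_{2m},\,3F_{2m+1}-F_{2m-3})^T$ for $n=2m$ and $(F_{2m},\,2F_{2m+2}-F_{2m-1})^T$ for $n=2m+1$, and $C$ sends either vector to $(F_{2m-2},F_{2m})^T$, which equals $B^{-\ell_2}(F_4,F_6)^T$ because $B^{-1}$ maps $(F_{2k},F_{2k+2})^T$ to $(F_{2k+2},F_{2k+4})^T$. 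So the target indices are $2\ell_2+4$ and $2\ell_2+6$, and the subtracted constant terms are not small but cancel against $F_{n+1}$ in the first coordinate.
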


\begin{proof}
  We expand
  \[
    B^{\ell_1} =
      \begin{pmatrix}
        F_{2\ell_1 + 2} & -F_{2\ell_1} \\
        F_{2\ell_1} & -F_{2\ell_1 - 2}
      \end{pmatrix},
  \]
  and multiply by the explicit vector $w_3$. Let $v := B^{\ell_1} w_3$. Using the designated values for $\ell_1$, the first coordinate simplifies to
  $v_1 = F_{2m}$ after applying Catalan's identity
  $F_{2m}^2 - F_{2m-2}F_{2m+2} = 1$
  (see \cite[p. xi]{cai} for instance).
  Applying $C$ to $v=(F_{2m},v_2)$ yields
  $Cv=(5F_{2m}-v_2,\,F_{2m})$, and substituting the explicit form of
  $v_2$ (computed as above) reduces $5F_{2m}-v_2$ to $F_{2m-2}$ by the same
  family of Fibonacci identities. Finally, applying $B^{\ell_2}$ to this vector
  and simplifying with d'Ocagne's identity
  \[
    F_{r+1}F_s - F_r F_{s+1} = (-1)^r F_{s-r}
  \]
  produces the constant vector $(3,8)$.
\end{proof}

By Equation \ref{eq:wn-product} and Lemma \ref{lem:tail-computation}, we can now evaluate the tail vector $w_n$:
\[
  \begin{pmatrix}
    x_n \\
    x_{n-1}
  \end{pmatrix}
    = w_n
    = A_{n-1} \cdots A_3 \, w_3
    = B^{\ell_2} C B^{\ell_1} w_3
    = \begin{pmatrix}
        3 \\
        8
      \end{pmatrix}.
\]
In particular, $x_n=3$ and $x_{n-1}=8$. By Equation \ref{eq:yn-def}, it follows immediately that
\[
  y_n = \frac{x_{n-1}+1}{x_n} = \frac{8+1}{3} = 3 \in \N.
\]

All $x_i$ and $y_i$ are integers: the recurrence Equation \ref{eq:x-rec} uses
only integer arithmetic starting from the integer ``front'' data
in Equation \ref{eq:front},
while $y_n$ was just shown to be an integer.

Positivity follows by showing that the sequence of $x_i$'s is strictly decreasing from $i=2$ down to $n$.
Using backward induction from the tail,
the base case is $x_{n-1} = 8 > 3 = x_n$.
For the induction step, let $3 \le i \le n-1$ and assume $x_i > x_{i+1}$.
Since $y_i \ge 3$ by Equation \ref{eq:y-pattern}, the recurrence in Equation \ref{eq:x-rec} gives
\[
  x_{i-1} = y_i x_i - x_{i+1} \;>\; y_i x_i - x_i = (y_i-1)x_i \;\ge\; 2x_i \;>\; x_i.
\]
Iterating downward yields $x_2 > x_3 > \cdots > x_n = 3$, ensuring $x_i \ge 3$ for all $2 \le i \le n$.
Because $x_1 = F_{n+1} > 0$ and each $y_i \in \{F_{n+2}, 1, 3, 5\}$ by construction, every $x_i$ and $y_i$ is strictly positive.
This establishes a valid positive integral solution to Equations \ref{eq:B1}--\ref{eq:B4} with $x_2 = F_{n+1}F_{n+2}-1$, completing the proof of Theorem \ref{thm:Bn-construction-fibonacci}.

\qed

\subsection{Maximal \texorpdfstring{$D_n$}{Dn} friezes}
We now prove the second part of
Theorem \ref{thm:max-bound},
which follows from
Proposition \ref{prop:correspondence}
and the following
explicit construction of
positive integral frieze points of type $D_n$
with maximal coordinate $F_{n}F_{n+1} - 1$.
Recall that the $n$-dimensional affine variety
$Y_{D_n} \subset \A^{2n}$
is defined by Equations \ref{eq:D1}--\ref{eq:Dlast}.
In particular,
we show that positive integral frieze points
of type $B_{n - 1}$ give rise to
positive integral frieze points of type $D_n$
via the map
\[
  \begin{tikzcd}[row sep = tiny]
    Y_{B_{n - 1}}(\N) \arrow[r] & Y_{D_n}(\N) \\
    (x_1', \ldots, x_{n-1}'; y_1', \ldots, y_{n-1}') \arrow[r, mapsto] & (x_1', x_1', x_2', x_3', \ldots, x_{n-1}';
      y_1', y_1', y_2', \ldots, y_{n - 1}').
  \end{tikzcd}
\]

\begin{theorem}
  \label{thm:Dn-construction-fibonacci}
  Let $n \geq 4$.
  There exists a positive integral point
  \[
    x = (x_1, \ldots, x_n; y_1, \ldots, y_n) \in Y_{D_n}(\N)
  \]
  such that
  \[
    x_3 = F_n F_{n+1} - 1.
  \]
  Moreover, $x_3$ is the largest coordinate of the
  point $x$.
\end{theorem}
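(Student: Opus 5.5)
The plan is to deduce Theorem \ref{thm:Dn-construction-fibonacci} from Theorem \ref{thm:Bn-construction-fibonacci} via the map $Y_{B_{n-1}}(\N) \to Y_{D_n}(\N)$ displayed just before the statement. The first step is to check that this map really lands in $Y_{D_n}$. Start from a point $x' = (x_1', \ldots, x_{n-1}'; y_1', \ldots, y_{n-1}') \in Y_{B_{n-1}}(\N)$ and set
\[
  x = (x_1', x_1', x_2', \ldots, x_{n-1}'; y_1', y_1', y_2', \ldots, y_{n-1}').
\]
Then $x_1 = x_2 = x_1'$, $x_{k} = x_{k-1}'$ for $k \geq 3$, and similarly for the $y$'s. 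We verify the equations one at a time:
\begin{itemize}
  \item Equations \ref{eq:D1} and \ref{eq:D2} both become $x_1' y_1' = x_2' + 1$, which is \ref{eq:B1}.
  \item Equation \ref{eq:D3} becomes $x_2' y_2' = x_1'^2 + x_3'$, which is \ref{eq:B2}; this is where the doubling of the short root is used.
  \item Equations \ref{eq:Dmid} for $4 \le i \le n-1$ become $x_{i-1}' y_{i-1}' = x_{i-2}' + x_i'$, which is \ref{eq:B3} for $3 \le i-1 \le n-2$.
  \item Equation \ref{eq:Dlast} becomes \ref{eq:B4} for $B_{n-1}$.
\end{itemize}
Positivity and integrality are preserved, since the coordinates of $x$ are just coordinates of $x'$ repeated.

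Next, for $n \geq 4$, so that $n-1 \geq 3$, we apply Theorem \ref{thm:Bn-construction-fibonacci} with rank $n-1$. This gives $x' \in Y_{B_{n-1}}(\N)$ with $x_2' = F_n F_{n+1} - 1$ as its largest coordinate. Its image $x$ then has $x_3 = x_2' = F_nF_{n+1}-1$. The coordinate multiset of $x$ equals that of $x'$, with $x_1'$ and $y_1'$ repeated, so $x_3$ is still the largest coordinate of $x$.

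One subtlety needs a check. The phrase ``largest coordinate'' in Theorem \ref{thm:Bn-construction-fibonacci} must cover the $y$'s as well. In the explicit construction, $y_1' = F_{n+1} < F_nF_{n+1}-1$ once $F_n \geq 2$, that is for $n \geq 3$; the remaining $y'$ values lie in $\{1,3,5\}$. The small-case tuples can be checked by inspection. We will therefore read the statement as covering all $2n$ coordinates and, if needed, justify this directly from the construction: $x_1' = F_n < x_2'$ and the decreasing chain $x_2' > x_3' > \cdots$.

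The main obstacle is mild: confirming the index bookkeeping in the $D_n$ equations, especially that the trivalent relation \ref{eq:D3} matches \ref{eq:B2} exactly, and that the range $4 \le i \le n-1$ aligns with \ref{eq:B3} for $B_{n-1}$. The edge case $n=4$ needs separate care, because then \ref{eq:Dmid} is vacuous and \ref{eq:Dlast} reads $x_4y_4 = x_3+1$, which corresponds to \ref{eq:B4} for $B_3$. Finally, Proposition \ref{prop:correspondence} turns $x$ into a $D_n$-frieze, which gives the $D_n$ part of Theorem \ref{thm:max-bound}.
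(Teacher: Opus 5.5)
Your proposal is correct and follows the paper's proof. You apply Theorem~\ref{thm:Bn-construction-fibonacci} at rank $n-1$, send the point through the map $Y_{B_{n-1}}(\N) \to Y_{D_n}(\N)$ that duplicates $x_1'$ and $y_1'$, check \ref{eq:D1}--\ref{eq:Dlast} against \ref{eq:B1}--\ref{eq:B4} as the paper does, and get maximality because the coordinates of $x$ are those of $x'$ with two repeats. Your explicit check that $y_1' = F_{n+1}$ and the other $y_i'$ stay below $x_2'$ just spells out the paper's remark that the $y_i'$ are bounded by smaller Fibonacci numbers.
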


\subsubsection*{Proof of Theorem \ref{thm:Dn-construction-fibonacci}}

\,

Fix an integer $n \geq 4$.
By Theorem \ref{thm:Bn-construction-fibonacci},
there exists a positive integer solution
\[
  (x_1', \ldots, x_{n-1}'; y_1', \ldots, y_{n-1}') \in \N^{2n-2}
\]
to Equations \ref{eq:B1}--\ref{eq:B4} with
\[
  x_2' = F_n F_{n+1} - 1.
\]

We now build a solution to the $D_n$ system
in Equations \ref{eq:D1}--\ref{eq:Dlast}
from this frieze point of type $B_{n-1}$
by the following explicit recipe:
\begin{equation}\label{eq:D-from-B}
  \begin{aligned}
    x_1 &:= x_1', &\quad y_1 &:= y_1',\\
    x_2 &:= x_1', &\quad y_2 &:= y_1',\\
    x_3 &:= x_2', &\quad y_3 &:= y_2',\\
    x_i &:= x_{i-1}', &\quad y_i &:= y_{i-1}'
    && (4 \leq i \leq n).
  \end{aligned}
\end{equation}
We check that Equations \ref{eq:D-from-B}
indeed gives a solution of
Equations \ref{eq:D1}--\ref{eq:Dlast}.

\begin{itemize}
  \item Using Equation \ref{eq:B1},
    we have $x_1' y_1' = x_2' + 1$. Therefore
    \[
      x_1 y_1 = x_1' y_1' = x_2' + 1 = x_3 + 1,
    \]
    so Equation \ref{eq:D1} holds, and similarly
    \[
      x_2 y_2 = x_1' y_1' = x_2' + 1 = x_3 + 1,
    \]
    so Equation \ref{eq:D2} holds as well.
  \item By Equation \ref{eq:B2},
    we have $x_2' y_2' = x_1'^2 + x_3'$. Then
    \[
      x_3 y_3
        = x_2' y_2'
        = x_1'^2 + x_3'
        = x_1 x_2 + x_4,
    \]
    since $x_1 = x_2 = x_1'$ and $x_4=x_3'$ by Equation \ref{eq:D-from-B}.
    Thus Equation \ref{eq:D3} holds.
  \item For $4 \leq i \leq n - 1$,
    we have $3 \leq i - 1 \leq n - 2$,
    so Equation \ref{eq:B3}
    applies at index $i - 1$:
    \[
      x_{i-1}' y_{i-1}' = x_{i-2}' + x_i'.
    \]
    Using Equation \ref{eq:D-from-B}, this becomes
    \[
      x_i y_i = x_{i - 1} + x_{i + 1},
    \]
    so Equation \ref{eq:Dmid}
    holds for all $4 \leq i \leq n-1$.
  \item Finally, Equation \ref{eq:B4} says
    \[
      x_{n - 1}' y_{n - 1}' = x_{n - 2}' + 1.
    \]
    Using Equation \ref{eq:D-from-B},
    $x_n = x_{n - 1}'$, $y_n = y_{n-1}'$, and
    $x_{n - 1} = x_{n - 2}'$; hence
    \[
      x_n y_n = x_{n - 1}' y_{n - 1}'
        = x_{n - 2}' + 1
        = x_{n - 1} + 1,
    \]
    so Equation \ref{eq:Dlast} holds.
\end{itemize}

Therefore Equation \ref{eq:D-from-B}
transforms any positive solution of
the type $B_{n-1}$
system into a positive solution of
the type $D_n$ system.
Since the $B_{n - 1}$ solution
was positive integral,
all $x_i, y_i$ are clearly
positive integers.
Moreover,
\[
  x_3 = x_2' = F_n F_{n+1} - 1
\]
by construction.

In the particular $B_{n-1}$ solution
given by
Theorem \ref{thm:Bn-construction-fibonacci},
we know that
$x_2' = F_n F_{n+1}-1$
is the largest of the $x_i'$.
All other $x_i'$ are strictly smaller, and
all $y_i'$ are also bounded by smaller Fibonacci numbers.
Under the identification
\[
  (x_1, \ldots, x_n; y_1, \ldots, y_n)
    = (x_1', x_1', x_2', x_3', \ldots, x_{n-1}';
      y_1', y_1', y_2', \ldots, y_{n - 1}'),
\]
the maximum among the $x_i$ is still
$x_3 = x_2'$.
The $y_i$ are also bounded by $x_2'$.
Thus in this frieze point of type $D_n$,
$x_3$ is indeed the largest coordinate.
Hence we have proved Theorem
\ref{thm:Dn-construction-fibonacci}.

\qed

\section{Exponential growth and minimal ranks}
\label{sec:exponential-growth}

We briefly note that
maximal frieze entries
exhibit exponential growth
in the classical families.

By Binet's formula
(originally due to de Moivre,
see \cite[p. xii]{cai} for instance),
\[
  F_n = \frac{\varphi^n - \psi^n}{\sqrt{5}},
\]
where $\varphi = \frac{1 + \sqrt{5}}{2}$
is the golden ratio
and $\psi = \frac{1 - \sqrt{5}}{2}$.
Since $\verts{\psi} < 1$, it follows
that the Fibonacci sequence
exhibits exponential growth
\[
  F_n \widesim{} \frac{\varphi^n}{\sqrt{5}}
  \qquad (n \rightarrow \infty).
\]
Combined with \cite[Theorem 1]{cheah-dsg}
and Theorem \ref{thm:max-bound},
we obtain the exponential growth
of maximal friezes in each classical family:
\begin{align*}
  u_{A_n, \N} &= F_{n+2}
    \widesim{} \frac{\varphi^{n+2}}{\sqrt{5}}, \\
  u_{B_n, \N} &\geq F_{n+1}F_{n+2} - 1
    \widesim{} \frac{\varphi^{2n+3}}{5}, \\
  u_{C_n, \N} &= F_{2n+1}
    \widesim{} \frac{\varphi^{2n+1}}{\sqrt{5}}, \\
  u_{D_n, \N} &\geq F_{n}F_{n+1} - 1
    \widesim{} \frac{\varphi^{2n+1}}{5}.
\end{align*}

Since an integer $k$ can occur
in a frieze of type $\Delta_n$ only if
$k \leq u_{\Delta_n, \N}$,
the bounds on $u_{\Delta_n, \N}$
give lower bounds
on the minimal rank
\[
  n_{\min}(\Delta, k)
  = \min\set{n \geq 2 \Mid k \text{ appears in some }
    F \in \frieze(\Delta_n, \N)}.
\]
For example, for type $A$,
one has
\[
  n_{\min}(A, k)
    \geq \log_\varphi\paren{\sqrt{5} \, k} - 2 + o(1),
\]
and for type $C$,
\[
  n_{\min}(C,k)
    \geq \frac{1}{2}
      \log_\varphi\paren{\sqrt{5} \, k}
      - \frac{1}{2} + o(1).
\]
Using the upper bounds of
\begin{align*}
  u_{B_n, \N} &\leq 2^{\frac{(n+1)^2(n-2)}{2}}, \\
  u_{D_n, \N} &\leq 2^{\frac{n^3}{2}},
\end{align*}
from \cite[Proposition 4.1]{zhang-2025},
we likewise obtain
\[
  n_{\min}(B, k), n_{\min}(D, k)
    \gg (\log k)^{1/3}.
\]
Thus, it might be the case that
the rank needed to realize
a prescribed integer grows much more slowly
than the upper bound
$n_{\min}(\Delta, k) \leq k$
deduced from Theorem \ref{thm:universality}
(viz. Remark \ref{rem:efficiency}).

%%%%%%%%%%%%%%%%%%%%%%%%%%%%%%%%%%%%%%%%%%%%%%%%%%%%%%%%%%%%%

\appendix
\newpage
\section{Maximal friezes for small \texorpdfstring{$n$}{n}}
\label{app:maximal}

In the following table, we list
frieze points of maximal height.
By \cite[Section 6.3]{dhl},
the first $n$ coordinates correspond
to the first diagonal $(F_{1, 1}, \ldots, F_{n, 1})$
in the frieze as an array of positive integers
(e.g. as in arrangement \ref{eq:D5-frieze} for $D_5$).
The data was computed by a brute-force search in C++
for points on $Y_{\Delta_n}(\N)$
up to the known values for $\#Y_{\Delta_n}(\N)$
in \cite[Table 1]{zhang-2025}.
Note that all maximal friezes of type ${\Delta_n}$
can be obtained as a horizontal translation
(i.e. in the $\tau$-orbit)
of frieze points of maximal height.

\begin{figure}[H]
  \centering
  \begin{tabular}{c|l}
    \textbf{Type} ${\Delta_n}$ & \textbf{Positive integral frieze points of type ${\Delta_n}$ with maximal height} \\
    \hline
    $B_2$ & $\begin{matrix*}[l] (2, 1; 1, 5) \\ (3, 2; 1, 5) \\ (3, 5; 2, 2) \\ (2, 5; 3, 1) \end{matrix*}$ \\
    \hline
    $B_3$ & $\begin{matrix*}[l] (3, 14, 5; 5, 1, 3) \\ (5, 14, 3; 3, 2, 5) \end{matrix*}$ \\
    \hline
    $B_4$ & $\begin{matrix*}[l] (5, 39, 14, 3; 8, 1, 3, 5) \\ (8, 39, 14, 3; 5, 2, 3, 5) \end{matrix*}$ \\
    \hline
    $B_5$ & $\begin{matrix*}[l] (8, 103, 39, 14, 3; 13, 1, 3, 3, 5) \\ (13, 103, 37, 8, 3; 8, 2, 3, 5, 3) \end{matrix*}$ \\
    \hline
    $B_6$ & $\begin{matrix*}[l] (13, 272, 103, 37, 8, 3; 21, 1, 3, 3, 5, 3) \\ (21, 272, 103, 37, 8, 3; 13, 2, 3, 3, 5, 3) \end{matrix*}$ \\
    \hline
    $B_7$ & $\begin{matrix*}[l] (21, 713, 272, 103, 37, 8, 3; 34, 1, 3, 3, 3, 5, 3) \\ (34, 713, 270, 97, 21, 8, 3; 21, 2, 3, 3, 5, 3, 3) \end{matrix*}$ \\
    \hline
    $B_8$ & $\begin{matrix*}[l] (34, 1869, 713, 270, 97, 21, 8, 3; 55, 1, 3, 3, 3, 5, 3, 3) \\ (55, 1869, 713, 270, 97, 21, 8, 3; 34, 2, 3, 3, 3, 5, 3, 3) \end{matrix*}$ \\
    \hline
    $B_9$ & $\begin{matrix*}[l] (55, 4894, 1869, 713, 270, 97, 21, 8, 3; 89, 1, 3, 3, 3, 3, 5, 3, 3) \\ (89, 4894, 1867, 707, 254, 55, 21, 8, 3; 55, 2, 3, 3, 3, 5, 3, 3, 3)\end{matrix*}$ \\
    \hline
    $D_4$ & $\begin{matrix*}[l] (3, 3, 14, 5; 5, 5, 1, 3) \\ (5, 5, 14, 3; 3, 3, 2, 5)\end{matrix*}$\\
    \hline
    $D_5$ & $\begin{matrix*}[l] (5, 5, 39, 14, 3; 8, 8, 1, 3, 5) \\ (8, 8, 39, 14, 3; 5, 5, 2, 3, 5)\end{matrix*}$ \\
    \hline
    $D_6$ & $\begin{matrix*}[l](8, 8, 103, 39, 14, 3; 13, 13, 1, 3, 3, 5) \\ (13, 13, 103, 37, 8, 3; 8, 8, 2, 3, 5, 3)\end{matrix*}$ \\
    \hline
    $D_7$ & $\begin{matrix*}[l](13, 13, 272, 103, 37, 8, 3; 21, 21, 1, 3, 3, 5, 3) \\ (21, 21, 272, 103, 37, 8, 3; 13, 13, 2, 3, 3, 5, 3)\end{matrix*}$ \\
    \hline
    $D_8$ & $\begin{matrix*}[l](21, 21, 713, 272, 103, 37, 8, 3; 34, 34, 1, 3, 3, 3, 5, 3) \\(34, 34, 713, 270, 97, 21, 8, 3; 21, 21, 2, 3, 3, 5, 3, 3) \end{matrix*}$ \\
    \hline
    $D_9$ & $\begin{matrix*}[l](34, 34, 1869, 713, 270, 97, 21, 8, 3; 55, 55, 1, 3, 3, 3, 5, 3, 3) \\ (55, 55, 1869, 713, 270, 97, 21, 8, 3; 34, 34, 2, 3, 3, 3, 5, 3, 3)\end{matrix*}$ \\
    \hline
    $D_{10}$ & $\begin{matrix*}[l](55, 55, 4894, 1869, 713, 270, 97, 21, 8, 3; 89, 89, 1, 3, 3, 3, 3, 5, 3, 3) \\ (89, 89, 4894, 1867, 707, 254, 55, 21, 8, 3; 55, 55, 2, 3, 3, 3, 5, 3, 3, 3)\end{matrix*}$ \\
    \hline
  \end{tabular}
  \caption{Maximal frieze data for $B_n$ and $D_n$ with small $n$.}
  \label{table:maximal-frieze-data}
\end{figure}

%%%%%%%%%%%%%%%%%%%%%%%%%%%%%%%%%%%%%%%%%%%%%%%%%%%%%%%%%%%%%

\newpage
\bibliography{bibliography}{}
\bibliographystyle{alpha}

\end{document}